\newtheorem{thm}{Theorem}[section]
\newtheorem{cor}[thm]{Corollary}
\newtheorem{lem}[thm]{Lemma}
\newtheorem{prop}[thm]{Proposition}
\numberwithin{equation}{section}
\theoremstyle{definition}
\newtheorem{defn}[thm]{Definition}
\newtheorem{rem}[thm]{Remark}
\newtheorem{ex}[thm]{Example}
\begin{document}
\title{A note on $L^p$-$L^q$ boundedness of the Fourier multipliers on noncommutative spaces}
\author[Michael Ruzhansky]{Michael Ruzhansky}
\address{
 Michael Ruzhansky:
  \endgraf
 Department of Mathematics: Analysis, Logic and Discrete Mathematics,
  \endgraf
 Ghent University, Ghent,
 \endgraf
  Belgium 
  \endgraf
  and 
 \endgraf
 School of Mathematical Sciences, Queen Mary University of London, London,
 \endgraf
 UK  
 \endgraf
  {\it E-mail address} {\rm michael.ruzhansky@ugent.be}
  }

 \author[Kanat Tulenov]{Kanat Tulenov}
\address{
  Kanat Tulenov:
  \endgraf
  Institute of Mathematics and Mathematical Modeling, 050010, Almaty, 
  \endgraf
  Kazakhstan 
  \endgraf
  and 
  \endgraf
Department of Mathematics: Analysis, Logic and Discrete Mathematics
  \endgraf
 Ghent University, Ghent,
 \endgraf
  Belgium
  \endgraf
  {\it E-mail address} {\rm kanat.tulenov@ugent.be} 
  }
\begin{abstract}
In this work, we study Fourier multipliers on noncommutative spaces. In particular, we show a simple proof of $L^p$-$L^q$ estimate of Fourier multipliers on general noncommutative spaces associated with semifinite von Neumann algebras. This includes the case of Fourier multipliers on general locally compact unimodular groups.
\end{abstract}

\maketitle
\section{Introduction}

 The $L^p$-$L^q$ boundedness of Fourier multipliers in $\mathbb{R}^d$ was obtained in 1960 by L. H\"ormander. More precisely, if $1 < p \leq  2\leq q <\infty,$ then the classical H\"ormander's result obtained in \cite[Theorem 1.11]{Hor} says that the Fourier multiplier $g(D):\mathcal{S}(\mathbb{R}^d)\to\mathcal{S}(\mathbb{R}^d)$ with the symbol $g:\mathbb{R}^d \to \mathbb{C}$ defined by $\widehat{g(D)(f)}=g \cdot \widehat{f}, \quad f\in\mathcal{S}(\mathbb{R}^d),$ has a bounded extension from $L^p(\mathbb{R}^{d})$ to $L^q(\mathbb{R}^{d}),$ if its symbol $g$ satisfies the condition 
$$\sup\limits_{\lambda>0}\lambda\left(\int\limits_{x\in\mathbb{R}^d \atop  |g(x)|\geq\lambda}dx\right)^{\frac{1}{p}-\frac{1}{q}}<\infty, \quad 1 < p \leq  2\leq q <\infty,$$
where $\mathcal{S}(\mathbb{R}^d)$ is the Schwartz class and
$\widehat{f}$ is the Fourier transform
$$\widehat{f}(t)=(2\pi)^{-d/2}\int_{\mathbb{R}^d}f(s)e^{-i(t,s)}ds, \quad t\in \mathbb{R}^d,$$ and $(t,s)$  the standard inner product in $\mathbb{R}^d.$  
In this work, we study the $L^p$-$L^q$ boundedness of the Fourier multipliers in a more general setting, namely, noncommutative spaces associated with semifinite von Neumann algebras. In other words, we obtain an analogue of the above mentioned H\"ormander Fourier multiplier theorem on the noncommutative spaces when $1 < p \leq  2\leq q <\infty.$ 
 Since our main focus is only $L^p$-$L^q$ boundedness, for the $L^p$-$L^p$ boundedness of Fourier multipliers we refer the reader to the papers \cite{Hor, KM, Mikh, Mar, Stein}. For the $L^p$-$L^q$ boundedness of the Fourier multipliers in defferent spaces, we refer the reader to \cite{AR, AMR, ARN, CK, Cow1993, Cow1974, K2, K3, K4, Mc, M-Nur, NT1, NT2, RV, RST1, RST2, Z}, and references therein. The idea of the proof of the $L^p$-$L^q$ boundedness of the Fourier multipliers in almost all above listed papers including  H\"ormander's paper \cite{Hor} himself relies
upon the Paley inequality and Hausdorff-Young-Paley inequality for the Fourier transform. Similar ideas were also used in \cite{AR} to extend H\"ormander's result to more general locally compact groups and group von Neuman algebras. However, we have found recently that there is a gap in the proof of \cite[Theorem 3.1]{AR}. Because of that, one of the main results \cite[Theorem 5.1]{AR} is not proved in full generality. In this paper, we provide an alternative approach for the $L^p$-$L^q$ boundedness of the Fourier multipliers which rectifies the gap contained in the proof of \cite[Theorem 3.1]{AR} and show that both \cite[Theorem 3.1]{AR} and \cite[Theorem 5.1]{AR} remain true in even more general settings. This idea was used very recently in \cite{Z}, where the author obtained similar results in the context of locally compact quantum groups. The idea is very simple which does not require Paley-type inequalities, which will be used in the proof of the main result Theorem \ref{Hormander-thm}. Instead, we prove the Paley inequality with this simple approach in our setting (see Theorem \ref{Paley-ineq}).
In order to obtain main results in general semifinite von Neumann algebras, we introduced a general notion of a Fourier structure. The Fourier
structure introduced in our paper is indeed a novel concept developed to address
specific challenges in the study of Fourier multipliers within general semifinite
von Neumann algebras. Unlike in classical settings, Fourier multipliers do not
exist in general von Neumann algebras. To overcome this limitation, we constructed the Fourier structure to enable the analysis of Fourier multipliers in this broader context.
Indeed, this Fourier
structure was first studied in an unpublished draft by the first author and Rauan Akylzhanov in 2018. Our motivation for introducing this structure is twofold:

To extend the study of Fourier multipliers to general semifinite von Neumann algebras, thereby addressing a gap in the existing mathematical framework.

To ensure that this structure naturally encompasses locally compact quantum groups, which generalize the classical theory of locally compact groups. This alignment with quantum groups highlights the versatility and relevance of the Fourier structure in contemporary mathematical analysis. 

Moreover, to demonstrate its applicability we showed two concrete examples, which exhibit properties that align with this new framework.
As this concept is introduced for the first time in this paper, there are no prior references directly defining or studying this structure. However, the underlying ideas draw inspiration from the general theory of locally compact groups (see \cite{Cas}) and von Neumann algebras. Also, our results cover the main results in \cite[Theorem 5.1]{AR} and \cite[Theorem 1.3]{Z} on locally compact groups and quantum groups, respectively.

\bigskip
\section{Preliminaries}

\subsection{Classical $L^p$ and $L^{p,q}$ spaces}
For $1\leq  p\leq\infty$ and $\mathbb{R}_{+}:=(0,\infty),$ as usual $L^p(\mathbb{R}_{+})$ is the $L^p$-spaces of pointwise almost-everywhere equivalence classes of $p$-integrable functions and $L^{\infty}(\mathbb{R}_{+})$ is the space of essentially bounded functions on the half line $\mathbb{R}_{+}.$ Let $0<  p,q\leq\infty.$ Then the Lorentz space on $\mathbb{R}_{+}$ is the space of complex-valued measurable functions $f$ on $\mathbb{R}_{+}$ such that the following quasinorm is finite 
$$\|f\|_{L^{p,q}(\mathbb{R}_{+})}=\left\{ \begin{array}{rcl}
         \left(\int\limits_{\mathbb{R}_{+}}\left(t^{\frac{1}{p}}f^*(t)\right)^q\frac{dt}{t}\right)^{\frac{1}{q}}, & \mbox{for}
         & q<\infty, \\ \sup\limits_{t>0}t^\frac{1}{p}f^*(t),\;\;\;\;\;\;\;\;\;\;\;\; & \mbox{for} & q=\infty,  
                \end{array}\right. 
$$
where $f^*$ is the decreasing rearrangement of the function $f.$ For more details about these spaces, we refer the reader e.g. to \cite{G2008}.

\subsection{Von Neuman algebras and $\tau$-measurable operators}
Let $\mathcal{H}$ be a complex Hilbert space and $B(\mathcal{H})$ be the algebra of all linear bounded operators on $\mathcal{H}.$ Let us denote by $\mathcal{M}\subset B(\mathcal{H})$ a semifinite  von Neumann algebra equipped with a faithful normal semifinite trace $\tau.$ The pair $(\mathcal{M},\tau)$ is called a \textit{non-commutative measure space}. A closed and densely defined operator $x$ on $\mathcal{H}$ with the domain $\mathrm{dom}(x)$ is said to be \textit{affiliated} with $\mathcal{M}$ if $u^{\ast}xu=x$ for each unitary operator $u$ in the commutant $\mathcal{M}'$ of $\mathcal{M}$. A closed and densely defined operator $x$ is called \textit{$\tau$-measurable} if $x$ is affiliated with $\mathcal{M}$ and for every $\varepsilon>0$ there exists a projection $p\in \mathcal{M}$ such that $p(H)\subset \mathrm{dom}(x)$ and $\tau(\mathbf{1}-p)<\varepsilon.$
The set of all $\tau$-measurable operators will be denoted by $L^0(\mathcal{M})$. Given a self-adjoint operator $x\in L^0(\mathcal{M})$ and a Borel set $\mathbb{B}\subset\mathbb{R}$, we denote by  $e^{|x|}(\mathbb{B})$ its spectral projection.
For any $x\in L^0(\mathcal{M})$ define the \textit{distribution function} by
$$d(s;|x|)=\tau(e^{|x|}(s,\infty)), \quad -\infty<s<\infty.$$
It is clear that the function $d(\cdot;|x|):[0,\infty)\rightarrow[0,\infty]$ is non-increasing and the normality of the trace implies that $d(s;|x|)$ is right-continuous.
Then the \textit{generalised singular value function} of $x$ is defined by
$$\mu(t;x)=\inf\left\{s>0: d(s;|x|)\leq t\right\}, \quad t>0.$$
The function $t\mapsto\mu(t;x)$ is decreasing and right-continuous and is the right inverse of the distribution function of $x.$ 
Moreover, we have for $x,y \in L^0(\mathcal{M},\tau)$ (see for instance \cite[Corollary 2.3.16 (b)]{LSZ}, \cite{FK})
\begin{equation}\label{decreas-proper}
\mu(t+s,xy)\leq\mu(t,x)\cdot\mu(s,y),\quad t,s>0.
\end{equation} For more discussion on generalised singular value functions, we refer the reader to 
\cite{DPS, FK, LSZ}.

\subsection{Noncommutative $L^p$ and $L^{p,q}$ spaces associated with a semifinite von Neuman algebra}

For $0<p<\infty$, define the noncommutative $L^p$-space
$$L^{p}(\mathcal{M})=\{x\in L^{0}(\mathcal{M}):\tau\left(|x|^{p}\right)<\infty\},$$
where,
$$\|x\|_{L^{p}(\mathcal{M})}=\left(\tau\left(|x|^{p}\right)\right)^{\frac1p}, \,\;x\in L^{p}(\mathcal{M}).$$
Then, the space $\left(L^{p}(\mathcal{M}), \|\cdot\|_{L^{p}(\mathcal{M})}\right)$ is a quasi-Banach (Banach for $p\geq 1$) space. This space is the
noncommutative $L^{p}$-space associated with $(\mathcal{M},\tau)$, denoted by $L^{p}(\mathcal{M},\tau)$ or simply
by $L^{p}(\mathcal{M})$. As usual, we set $L^{\infty}(\mathcal{M},\tau)=\mathcal{M}$ equipped with the operator norm. Moreover, we have $$\|x\|_{L^{\infty}(\mathcal{M})}=\|\mu(x)\|_{L^{\infty}(0,\infty)}=\mu(0,x),$$ $x\in L^{\infty}(\mathcal{M}).$

\begin{defn}Let $0 < p, q\leq \infty$ and let $\mathcal{M}$ be a semifinite von Neumann algebra. Then, define the non-commutative Lorentz $L^{p,q}(\mathcal{M})$ space by
$$L^{p,q}(\mathcal{M}):=\{x\in L^{0}(\mathcal{M}):\|x\|_{L^{p,q}(\mathcal{M})}<\infty\},$$
where
$$
\|x\|_{L^{p,q}(\mathcal{M})}:=\left(\int_{
\mathbb{R}_+}\left(t^\frac{1}{p}\mu(t;x)\right)^{q}\frac{dt}{t}\right)^{\frac1q},\,\ \text{for} \,\ q<\infty,
$$
and 
$$\|x\|_{L^{p,q}(\mathcal{M})}:=\sup_{t>0}t^{\frac{1}{p}}\mu(t,x), \,\ \text{for} \,\ q=\infty.$$
In other words, we have 
\begin{equation}\label{NC Lorentz space norm II}\|x\|_{L^{p,q}(\mathcal{M})}=\|\mu(x)\|_{L^{p,q}(\mathbb{R}_+)}.
\end{equation}
In particular,
$L^{p,p}(\mathcal{M})=L^p(\mathcal{M})$ isometrically for $1<p<\infty.$
\end{defn}
For more details on noncommutative spaces associated with von Neuman algebras, we refer the reader to \cite{DPS, LSZ, PXu}.
The following are well known results for the classical Lorentz $L^{p,q}$ spaces which play key role for our investigation.
\begin{prop}\label{0.1}

\begin{enumerate}[\rm(i)]

   \item \cite[Proposition 1.4.10, p. 49]{G2008} Let $ 0 < p \leq \infty $ and $0 < q < r \leq\infty.$ Then there exists a constant $C_{p,q,r}>0$ (which depends on p, q, and r) such that
\begin{eqnarray}\label{Lorentz-embedding-1}
\|f\|_{L^{p,r}(\mathbb{R}_{+})}\leq C_{p,q,r}\|f\|_{L^{p,q}(\mathbb{R}_{+})}, \quad f\in L^{p,q}(\mathbb{R}_{+}).    
\end{eqnarray} 
\item \cite[Exercises 1.4.19, p. 73]{G2008} Let $0<p,q,r\leq \infty,$ $0<s_1,s_2\leq\infty,$    $\frac{1}{p}+\frac{1}{q}=\frac{1}{r}$ and $\frac{1}{s_1}+\frac{1}{s_2}=\frac{1}{s}.$ Then for any $f\in L^{p,s_1}(\mathbb{R}_{+})$ and $g\in L^{q,s_2}(\mathbb{R}_{+})$ there exists a constant $C_{p,q,s_1,s_2}>0$ (which depends on p, q, $s_1,$ and  $s_2$) such that,
\begin{equation}\label{Lorentz-embedding-2}
\|fg\|_{L^{r,s}(\mathbb{R}_{+})}\leq C_{p,q,s_1,s_2}\|f\|_{L^{p,s_1}(\mathbb{R}_{+})}\|g\|_{L^{q,s_2}(\mathbb{R}_{+})}.          \end{equation} 
\end{enumerate}
    \end{prop}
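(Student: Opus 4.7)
The plan is to follow the standard rearrangement techniques from Grafakos's book. Both statements reduce to inequalities involving the decreasing rearrangement $f^\ast$ on $(0,\infty)$, so by the very definition of the Lorentz quasinorms they are purely one-variable facts.

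For (i), I would first handle the endpoint case $r=\infty$ and then interpolate. The key observation is that $f^\ast$ is non-increasing, so for any fixed $t>0$,
\begin{equation*}
t^{q/p}(f^\ast(t))^q = \tfrac{q}{p}\int_0^t s^{q/p}(f^\ast(t))^q\,\tfrac{ds}{s} \leq \tfrac{q}{p}\int_0^t \bigl(s^{1/p}f^\ast(s)\bigr)^q\,\tfrac{ds}{s} \leq \tfrac{q}{p}\|f\|_{L^{p,q}(\mathbb R_+)}^q,
\end{equation*}
which yields $\|f\|_{L^{p,\infty}(\mathbb R_+)}\leq (q/p)^{1/q}\|f\|_{L^{p,q}(\mathbb R_+)}$. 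For $q<r<\infty$, I would then split the $L^{p,r}$-integrand as
\begin{equation*}
\bigl(t^{1/p}f^\ast(t)\bigr)^r = \bigl(t^{1/p}f^\ast(t)\bigr)^{r-q}\cdot\bigl(t^{1/p}f^\ast(t)\bigr)^q \leq \|f\|_{L^{p,\infty}(\mathbb R_+)}^{r-q}\bigl(t^{1/p}f^\ast(t)\bigr)^q,
\end{equation*}
integrate with respect to $dt/t$, and combine with the first bound to obtain the claimed constant $C_{p,q,r}=(q/p)^{(r-q)/(qr)}$.

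For (ii), the starting point is the classical pointwise estimate for rearrangements, $(fg)^\ast(t_1+t_2)\leq f^\ast(t_1)g^\ast(t_2)$, which specialises to $(fg)^\ast(2u)\leq f^\ast(u)g^\ast(u)$. After the change of variable $t=2u$,
\begin{equation*}
\|fg\|_{L^{r,s}(\mathbb R_+)}^{s} = 2^{s/r}\int_0^\infty \bigl(u^{1/r}(fg)^\ast(2u)\bigr)^{s}\,\tfrac{du}{u} \leq 2^{s/r}\int_0^\infty \bigl(u^{1/p}f^\ast(u)\bigr)^{s}\bigl(u^{1/q}g^\ast(u)\bigr)^{s}\,\tfrac{du}{u},
\end{equation*}
using $1/r=1/p+1/q$ to split the weight. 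Since $s/s_1+s/s_2=1$, Hölder's inequality on $(\mathbb R_+,du/u)$ with exponents $s_1/s$ and $s_2/s$ finishes the bound with constant $C_{p,q,s_1,s_2}=2^{1/r}$.

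The main technical nuisance I expect is bookkeeping for the limiting cases where one or more of $q,r,s,s_1,s_2$ equals $\infty$: in each such case Hölder or the weighted integral above must be replaced by a supremum, and the monotonicity argument in (i) needs to be adapted directly. A second minor point is that for $p,q,r<1$ the expressions are only quasi-norms, so one must check that the manipulations above are purely pointwise in $f^\ast,g^\ast$ and do not invoke the triangle inequality, which is indeed the case. Beyond these routine case distinctions, no genuinely new idea is required.
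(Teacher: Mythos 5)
The paper gives no proof of this proposition, citing Grafakos (Proposition 1.4.10 and Exercise 1.4.19) instead; your argument is precisely the standard proof from that reference and is correct. Both steps check out: the monotonicity bound $\|f\|_{L^{p,\infty}}\leq (q/p)^{1/q}\|f\|_{L^{p,q}}$ followed by the splitting $(t^{1/p}f^*(t))^r\leq\|f\|_{L^{p,\infty}}^{r-q}(t^{1/p}f^*(t))^q$ for (i), and the rearrangement product inequality $(fg)^*(2u)\leq f^*(u)g^*(u)$ combined with H\"older on $(\mathbb{R}_+,du/u)$ for (ii), modulo the routine limiting cases you already flag.
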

Note that the noncommutative extensions of the above results for noncommutative Lorentz spaces associated with a semifinite  von Neuman algebra can be found from \cite{DPS} and \cite{PXu} (see also  \cite{Z}).
\begin{prop}\label{prop-0.2}

\begin{enumerate}[\rm(i)]

   \item \cite[Theorem 6.7.12 (ii), p. 402]{DPS}, \cite[Lemma 2.1 (ii)]{Z} If $ 1\leq p < \infty $ and $1\leq q \leq r \leq\infty,$ then $L^{p,q}(\mathcal{M}) \subset L^{p,r}(\mathcal{M})$ and there exists a constant $C_{p,q,r}>0$ (which depends on p, q, and r) such that
\begin{eqnarray}\label{Lorentz-embedding-1-oper}
\|x\|_{L^{p,r}(\mathcal{M})}\leq C_{p,q,r}\|x\|_{L^{p,q}(\mathcal{M})},\quad x\in L^{p,q}(\mathcal{M}).     
\end{eqnarray} 
\item \cite[Lemma 2.2]{Z} Let $0<p_0,p_1<\infty$ and $0<q<\infty$ such that   $\frac{1}{p}=\frac{1}{p_0}+\frac{1}{p_1}.$ Then we have
\begin{eqnarray}\label{Lorentz-embedding-2-oper}
\|xy\|_{L^{p,q}(\mathcal{M})}\leq 2^{\frac{1}{p}}\|x\|_{L^{p_0,\infty}(\mathcal{M})}\|y\|_{L^{p_1,q}(\mathcal{M})}, x\in L^{p_0,\infty}(\mathcal{M}), y\in L^{p_1,q}(\mathcal{M}).   
\end{eqnarray} 
\end{enumerate}
    \end{prop}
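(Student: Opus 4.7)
The plan is to reduce both assertions to the corresponding scalar facts on $\mathbb{R}_+$ collected in Proposition \ref{0.1}, using the basic identity \eqref{NC Lorentz space norm II}, which says that the noncommutative Lorentz norm of $x$ coincides with the classical Lorentz norm of its generalised singular value function $\mu(\,\cdot\,;x)$ on the half line. This principle turns every question about $L^{p,q}(\mathcal{M})$ norms into a purely scalar computation.

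Part (i) will follow at once: since $\mu(\,\cdot\,;x) \in L^{p,q}(\mathbb{R}_+)$, applying the classical embedding \eqref{Lorentz-embedding-1} to this scalar function and then invoking \eqref{NC Lorentz space norm II} on both sides delivers the stated inequality with exactly the constant $C_{p,q,r}$ coming from the scalar case.

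For part (ii) the key input is the submultiplicativity \eqref{decreas-proper}, which at $t=s$ reads $\mu(2t;xy) \leq \mu(t;x)\mu(t;y)$. The idea is to compute
\[
\|xy\|_{L^{p,q}(\mathcal{M})}^{q} = \int_0^\infty \bigl(t^{1/p}\mu(t;xy)\bigr)^q\,\frac{dt}{t},
\]
perform the change of variables $t=2s$ (which pulls out a factor $2^{q/p}$ while leaving $dt/t = ds/s$), and then use the pointwise bound above. This reduces matters to controlling $\int_0^\infty s^{q/p}\mu(s;x)^q\mu(s;y)^q\,\frac{ds}{s}$. Inserting the pointwise estimate $\mu(s;x) \leq s^{-1/p_0}\|x\|_{L^{p_0,\infty}(\mathcal{M})}$, which is just the definition of the weak-type norm, and using the relation $\frac{1}{p}-\frac{1}{p_0} = \frac{1}{p_1}$ to rewrite the resulting power of $s$, the integral collapses to $\|y\|_{L^{p_1,q}(\mathcal{M})}^q$ after a final application of \eqref{NC Lorentz space norm II}.

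The only point that requires attention is keeping the constant sharp at $2^{1/p}$. The argument above produces this cleanly provided the passage from the weak-type bound on $x$ is carried out pointwise in $s$ inside the integral; invoking the general scalar H\"older bound \eqref{Lorentz-embedding-2} as a black box would introduce the extra constant $C_{p_0,p_1,\infty,q}$, which is in general strictly larger than $1$. So the mild but necessary obstacle is to bypass Proposition \ref{0.1}(ii) and instead obtain the product estimate directly from the definition of $\|\cdot\|_{L^{p_0,\infty}}$, which is what makes the constant $2^{1/p}$ reachable.
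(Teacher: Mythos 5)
Your proposal is correct, and it is essentially the argument the paper delegates to its references: the paper gives no proof of this proposition but cites \cite[Theorem 6.7.12]{DPS} and \cite[Lemmas 2.1--2.2]{Z}, where part (i) is exactly the reduction to the scalar embedding via $\|x\|_{L^{p,q}(\mathcal{M})}=\|\mu(x)\|_{L^{p,q}(\mathbb{R}_+)}$ and part (ii) is exactly the substitution $t=2s$ combined with $\mu(2s;xy)\leq\mu(s;x)\mu(s;y)$ and the pointwise bound $\mu(s;x)\leq s^{-1/p_0}\|x\|_{L^{p_0,\infty}(\mathcal{M})}$. Your remark that one must use the weak-type bound pointwise rather than the black-box H\"older estimate \eqref{Lorentz-embedding-2} is precisely what yields the clean constant $2^{1/p}$.
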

\subsection{Fourier structure on semifinite von Neumann algebras}
Let $\mathcal{H}$ be a separable Hilbert space and let $\mathcal{M}$ and $\widehat{\mathcal{M}}$ be semifinite von Neumann algebras with normal semifinite faithful tracial states $\tau$ and $\widehat{\tau},$ respectively.
 We denote by $\mathcal{M}_{*}$ the predual of $\mathcal{M}$, i.e. $\left(\mathcal{M}_{*}\right)^{*} \cong \mathcal{M}$. With each trace $\tau,$ we can associate the Hilbert space $L^{2}(\mathcal{M})$ via the GNS construction.

\begin{defn}\label{Fourier-structure}(Fourier structure). We shall say that $\mathcal{M} \subset B(\mathcal{H})$ and $\widehat{\mathcal{M}} \subset B(\mathcal{H})$ are endowed with a Fourier structure if:
\begin{enumerate}[{\rm (F1)}]
\item  There is a linear map $\mathcal{F}_{1}: \mathcal{M}_{*} \rightarrow \widehat{\mathcal{M}}$ such that

\[
\begin{array}{r}
\left\|\mathcal{F}_{1}[x]\right\|_{\widehat{\mathcal{M}}} \leq\|x\|_{\mathcal{M}_{*}}, \quad x \in \mathcal{M}_{*} \cap \mathcal{M}, 
\quad \|x\|_{\mathcal{M}} \leq \widehat{\tau}\left(\left|\mathcal{F}_{1}[x]\right|\right).
\end{array}
\]

\item  There exists a bounded linear map $\mathcal{F}_{2}: L^{2}(\mathcal{M}) \rightarrow L^{2}(\widehat{\mathcal{M}})$, and

\begin{equation*}
\widehat{\tau}\left(\mathcal{F}_{2}[x] \mathcal{F}_{2}[y]^{*}\right)=\tau\left(x y^{*}\right).
\end{equation*}

\item  The linear maps $\mathcal{F}_{1}$ and $\mathcal{F}_{2}$ agree on the intersection $\mathcal{M}_{*} \cap \mathcal{M}$, i.e.

\begin{equation*}
\mathcal{F}_{1}(x)=\mathcal{F}_{2}(x), \quad x \in \mathcal{M}_{*} \cap \mathcal{M}.
\end{equation*}
\end{enumerate}
\item 
\end{defn} 

\begin{defn}\label{Fourier-transform}We shall say that $\mathcal{M}$ admits a Fourier structure if there exists a triple $\left(\widehat{\mathcal{M}}, \mathcal{F}_{1}, \mathcal{F}_{2}\right)$ satisfying conditions (F1)-(F3). In this case, on the intersection $\mathcal{M}_{*} \cap \mathcal{M}$ the map $\mathcal{F}_1=\mathcal{F}_{2}$ will be denoted by $\mathcal{F}.$
If $\mathcal{M}$ admits a Fourier structure, then a linear map 
$$\mathcal{F}: \mathcal{M}_{*} \cap \mathcal{M} \rightarrow \widehat{\mathcal{M}}, $$
$$\mathcal{F}:[x]\to \mathcal{F}[x]$$
is called a {\it Fourier transform} on $\mathcal{M}.$
\end{defn}

\begin{ex} Let $\mathcal{H}$ be a separable Hilbert space and let  $\mathcal{M}$ be a semifinite von Neumann algebra. Let us fix a linear self-adjoint operator $\mathcal{D}$ affiliated with $\mathcal{M}$. Kadison and Singer showed that there is unique maximal abelian algebra $\langle\mathcal{D}\rangle$ generated by $\mathcal{D}$. The spectral expansion in generalized eigenfunctions associated with arbitrary self-adjoint operators in separable Hilbert spaces has been obtained in \cite{LT16}.

Let $\left(W,\left(\mathcal{H}^{\lambda}\right)_{\lambda \in \operatorname{sp}(\mathcal{D})}\right)$ be the direct integral decomposition associated with $\operatorname{sp}(\mathcal{D})$ and where $W: L^{2}(\mathcal{M}) \rightarrow \bigoplus \int\limits_{\operatorname{sp}(\mathcal{D})} \mathcal{H}^{\lambda} d \mu(\lambda)$. Here, $\mu(\lambda)$ is a spectral measure for $\mathcal{D}$. It is a standard fact that spectral measures exist.

Let $\mathcal{S}$ be an algebraic subspace of $\mathcal{H}$ and let $\mathcal{S}^{*}$ be the algebraic dual of $\mathcal{S}$. An element $e^{\lambda} \in \mathcal{S}^{*}$ is called a generalized eigenfunction of $\mathcal{D}$ corresponding to the eigenvalue $\lambda \in \operatorname{sp}(\mathcal{D})$ if

$$
e^{\lambda}(\mathcal{D} u)=\lambda e^{\lambda}(u), \quad u \in \mathcal{S} \cap \operatorname{Dom}(\mathcal{D}).
$$
Let us define the von Neumann algebra $\widehat{\mathcal{M}}$ as the commutant of $\langle\mathcal{D}\rangle$, i.e.

\begin{equation}\label{2.8}
\widehat{\mathcal{M}}:=[\langle\mathcal{D}\rangle]^{!}. 
\end{equation}
Then the Fourier map $\mathcal{F}_{1}: \mathcal{M}_{*} \rightarrow \widehat{\mathcal{M}}$ is given by
$$
\mathcal{F}_{1}: \mathcal{M}_{*} \ni x \mapsto \widehat{x}=\bigoplus \int_{\operatorname{sp}(\mathcal{D})} \widehat{x}(\lambda) d \mu(\lambda) \in \widehat{\mathcal{M}},
$$
where $\widehat{x}(\lambda)=\operatorname{diag}\left(e_{1}^{\lambda}\left(x^{\lambda}\right), e_{2}^{\lambda}\left(x^{\lambda}\right),\ldots\right): \mathcal{H}^{\lambda} \rightarrow \mathcal{H}^{\lambda}$. Here, the set $\left\{e_{k}^{\lambda}\right\}_{k=1}^{\infty}$ is a Hilbert space basis in $\mathcal{H}^{\lambda}$ and

\begin{equation*}
\|x\|_{\mathcal{H}}^{2}=\int_{\operatorname{sp}(\mathcal{D})}\left\|x^{\lambda}\right\|_{\mathcal{H}^{\lambda}}^{2} d \mu(\lambda), \quad \text { for every } x \in \mathcal{H}.
\end{equation*}
By \cite[Theorem 2, p. 6]{LT16}, the elements $e_{k}^{\lambda}$ are generalised eigenfunctions of $\mathcal{D}$ corresponding to $\lambda \in \operatorname{Sp}(\mathcal{D})$, i.e. $\left(D e_{k}^{\lambda}, u\right)_{\mathcal{H}}=\lambda\left(e_{k}^{\lambda}, u\right)_{\mathcal{H}},  u \in \mathcal{S} \cap \operatorname{Dom}(\mathcal{D})$. It can be checked that the conditions in Definition \ref{Fourier-structure} can be satisfied. It is sufficient to verify (F1) for each factor in the direct integral decomposition. Indeed, for each $\lambda \in \operatorname{Sp}(\mathcal{D})$, without loss of generality, we can assume that $\sup\limits_{k}\left\|e_{k}^{\lambda}\right\| \leq 1$. Then we have

$$
\|\widehat{x}(\lambda)\|_{B\left(\mathcal{H}^{\lambda}\right)}=\sup _{k}\left|e_{k}^{\lambda}(x)\right| \leq\|x\|_{\mathcal{M}_{*}},
$$
for each $x \in L^2(\mathcal{M}) \cap \mathcal{M}_{*}$.

The Plancherel decomposition in (F2) holds true due to the spectral expansion (see \cite[Formula (PF)]{LT16} for more details). Condition (F3) is used just for the well-definedness of the Fourier transform.
\end{ex}
Another prototypical example is a locally compact quantum group.

\begin{ex} Let $\left(\mathbb{G}, \Delta, \varphi_{L}, \varphi_{R}\right)$ be a locally compact quantum group where $\mathbb{G}$ is a $C^{*}$-algebra, $\Delta$ is the coproduct map and $\varphi_{L}, \varphi_{R}$ are left and right Haar weights, respectively. By the Kusterman-Vaes-Pontryagin duality (see \cite{Cas},\cite{KV}), there exists the dual quantum group $\left(\widehat{\mathbb{G}}, \widehat{\Delta}, \widehat{\varphi}_{L}, \widehat{\varphi}_{R}\right)$. Assume that left and right Haar weights $\varphi_{L}, \varphi_{R}$ and their duals $ \widehat{\varphi}_{L}, \widehat{\varphi}_{R}$ are tracial.  Let us denote by $L^{\infty}(\mathbb{G})$ the corresponding von Neumann algebra. We take
$$
\mathcal{M}=L^{\infty}(\mathbb{G}), \quad \widehat{\mathcal{M}} =L^{\infty}(\widehat{\mathbb{G}}), $$
$$
\omega =\varphi_{L}, \quad \widehat{\omega}=\widehat{\varphi}_{R}.
$$
Then $\mathcal{M}$ is a von Neumann algebra and $\omega$ is a weight on $\mathcal{M}$. We denote by $\mathcal{M}_{*}$ the predual of $\mathcal{M},$ which is identified with $L^1(\mathcal{M})$ via the map
$j(x)=\omega(x\cdot).$
Then conditions (F1)-(F3) in Definition \ref{Fourier-structure} are satisfied and for detailed discussions we refer the reader to \cite[Theorem 5.2]{Cas} and \cite[Section 3.2]{Z}.
Indeed, let us introduce a map $\lambda$
\begin{equation}\label{2.13}
\lambda: \mathcal{M}_{*} \ni \omega \mapsto(\omega \otimes 1) W \in B(\mathcal{H}),
\end{equation}
where $W$ is the unitary operator implementing the coproduct $\Delta$, i.e.
$$
\Delta(a)=W(1 \otimes a) W^{*}.
$$
The dual group $\widehat{\mathbb{G}}$ is given by
$$
\widehat{\mathbb{G}}=\left\{(x \otimes 1)(W): x \in L^{1}(\mathbb{G})\right\}^{\text {strong topology closure }}.
$$
The $L^{1}(\mathbb{G}) \to L^{\infty}(\mathbb{G})$ Fourier map $\mathcal{F}_1$ is given by
\begin{equation*}
L^{1}(\mathbb{G}) \in x \mapsto \widehat{x}=\lambda(x)=(x \otimes 1) W \in L^{\infty}(\widehat{\mathbb{G}}).
\end{equation*}
The $L^{2}$-Fourier theory $\mathcal{F}_2$ is given by
$$
L^{2}(\mathbb{G}) \ni x \mapsto \lambda(x \omega)=(x \omega \otimes 1) W \in L^{2}(\widehat{\mathbb{G}}).
$$
The Plancherel identity takes the form (see \cite[Theorem 5.2]{Cas}, \cite[Theorem 2.3.11]{Tim08}, \cite{Z})
\begin{equation*}
\|\lambda(x w)\|_{L^{2}(\widehat{\mathbb{G}})}=\|x\|_{L^{2}(\mathbb{G})},
\end{equation*}
where $\lambda$ is given by \eqref{2.13}.
\end{ex}

\begin{defn}\label{Fourier-multp}  Let $\mathcal{M}$ admit a Fourier structure $(\widehat{\mathcal{M}},\mathcal{F}_1,\mathcal{F}_2)$ and let $\widehat{\mathcal{M}}$ admit a Fourier structure $(\mathcal{M}, \widehat{\mathcal{F}}_1, \widehat{\mathcal{F}}_2)$. We assume that 
    \begin{equation*}
    (\mathcal{F}\cdot \widehat{\mathcal{F}})[y] = y, \quad y\in L^2(\widehat{\mathcal{M}}),
    \end{equation*}
    \begin{equation*}
    (\widehat{\mathcal{F}}\cdot \mathcal{F})[x] = x, \quad x \in L^2(\mathcal{M}).
    \end{equation*}
    Let $\sigma\in L^0(\widehat{\mathcal{M}}).$ Then a linear map $$A_{\sigma}:\mathcal{M}_{*}\cap\mathcal{M} \ni x\mapsto\widehat{\mathcal{F}}\cdot\sigma\cdot\mathcal{F}[x]\in L^{0}(\mathcal{M})$$ is called a {\it Fourier multiplier} on $\mathcal{M},$ with the symbol $\sigma.$
    
    Moreover, if this map can be extended to a bounded map from $L^p(\mathcal{M})$ to $L^p(\mathcal{M}),$ then we call it the {\it $L^p$-$L^q$ Fourier multiplier} on $\mathcal{M}.$ 
\end{defn}

\begin{rem}The question how to define the Fourier multiplier on $\mathcal{M}$
comes from the paper \cite{AR}. Indeed, if $G$ is a locally compact separable unimodular group and $VN(G)$ is a group von Neumann algebra, then we can see that
$A$ is a left Fourier multiplier on $G$ if and only if $A$ is affiliated with the right von Neumann algebra $VN_R(G)$ and is $\tau$-measurable. Operators affiliated with the right von Neumann algebra $VN_R(G)$ are precisely those $A$ that are left-invariant on $G,$ that is,
$$A\pi_L(g)=\pi_L(g) A, \quad \text{for all}\quad g\in G,$$
where $\pi_L(g)$ is the left action of $G$ on $L^2(G).$
In other words, left Fourier multipliers on $G$ are
precisely the left-invariant operators that are $\tau$-measurable.
For more details, see \cite[Definition 2.16]{AR} and \cite[Remark 2.17]{AR}.
\end{rem}

\section{$L^p$ -$L^q$ boundedness of the Fourier multipliers and Paley type inequality}
 We start with the Hausdorff-Young inequality.
\begin{lem}(Hausdorff-Young inequality)\label{L.5.2} Let $1\leq p\leq2$ with $\frac{1}{p}+\frac{1}{p'}=1.$ Then we have
 \begin{eqnarray}\label{R-H-Y_ineq}
\|\mathcal{F}[x]\|_{L^{p',p}(\widehat{\mathcal{M}})} \lesssim_{p,p'}  \|x\|_{L^{p}(\mathcal{M} )}.
\end{eqnarray}
 \end{lem}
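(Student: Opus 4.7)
The plan is to derive the Hausdorff--Young--Lorentz inequality by real interpolation between two endpoint bounds that are supplied essentially for free by the Fourier-structure axioms (F1) and (F2). The target space $L^{p',p}$ is the well-known Lorentz-space sharpening of the classical Hausdorff--Young conclusion, and the sharpening comes precisely from invoking the real (rather than complex) interpolation functor.

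First I would extract the two endpoints. Identifying $\mathcal{M}_{*}$ with $L^{1}(\mathcal{M})$ and $\widehat{\mathcal{M}}$ with $L^{\infty}(\widehat{\mathcal{M}})$, axiom (F1) reads $\|\mathcal{F}[x]\|_{L^{\infty}(\widehat{\mathcal{M}})} \leq \|x\|_{L^{1}(\mathcal{M})}$ for $x \in \mathcal{M}_{*} \cap \mathcal{M}$, i.e.\ strong type $(1,\infty)$. Setting $y=x$ in axiom (F2) yields the Plancherel identity $\|\mathcal{F}[x]\|_{L^{2}(\widehat{\mathcal{M}})} = \|x\|_{L^{2}(\mathcal{M})}$, i.e.\ strong type $(2,2)$. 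The cases $p=2$ (Plancherel) and $p=1$ (directly from (F1), interpreting $L^{\infty,1}$ in the limit sense) are therefore immediate, and the whole content of the lemma lies in the intermediate range $1<p<2$.

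Next I would invoke Marcinkiewicz-type real interpolation for noncommutative Lorentz spaces associated with a semifinite von Neumann algebra. Fix $\theta=2/p' \in (0,1)$, so that $\tfrac{1}{p} = (1-\theta) + \tfrac{\theta}{2}$ and $\tfrac{1}{p'} = \tfrac{\theta}{2}$. Using the noncommutative Lions--Peetre identifications
\[
(L^{1}(\mathcal{M}), L^{2}(\mathcal{M}))_{\theta, p} = L^{p}(\mathcal{M}), \qquad (L^{\infty}(\widehat{\mathcal{M}}), L^{2}(\widehat{\mathcal{M}}))_{\theta, p} = L^{p',p}(\widehat{\mathcal{M}}),
\]
valid in the semifinite setting through the theory of generalized singular-value functions (as in Dodds--Dodds--de Pagter and Pisier--Xu), together with the interpolation property of bounded linear operators, yields at once
\[
\|\mathcal{F}[x]\|_{L^{p',p}(\widehat{\mathcal{M}})} \lesssim_{p,p'} \|x\|_{L^{p}(\mathcal{M})}, \qquad 1<p<2.
\]

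The main obstacle is justifying the target-side identification $(L^{\infty}(\widehat{\mathcal{M}}), L^{2}(\widehat{\mathcal{M}}))_{\theta, p} = L^{p',p}(\widehat{\mathcal{M}})$ with $L^{\infty}=\widehat{\mathcal{M}}$ as an endpoint, since $L^{\infty}$ is not reflexive and interpolation with it requires some care in the noncommutative world. I would handle this by transferring everything to the commutative side via the generalized singular-value function: thanks to the identity $\|y\|_{L^{r,q}(\widehat{\mathcal{M}})} = \|\mu(\cdot;y)\|_{L^{r,q}(\mathbb{R}_+)}$ recorded in \eqref{NC Lorentz space norm II}, the K-functional of $y$ in the pair $(L^{\infty}(\widehat{\mathcal{M}}), L^{2}(\widehat{\mathcal{M}}))$ is comparable to the K-functional of $\mu(\cdot;y)$ in the classical pair $(L^{\infty}(\mathbb{R}_+), L^{2}(\mathbb{R}_+))$, and the required identification then reduces to its well-known classical counterpart. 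Equivalently, one can run the K-decomposition argument directly in the von Neumann setting using the spectral projections of $|\mathcal{F}[x]|$; once this is in place, the rest of the proof is purely formal.
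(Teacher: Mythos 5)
Your proposal is correct and follows essentially the same route as the paper: both extract the strong type $(1,\infty)$ bound from axiom (F1) and the strong type $(2,2)$ bound from the Plancherel identity (F2), and then apply real (Marcinkiewicz-type) interpolation to land in $L^{p',p}(\widehat{\mathcal{M}})$, the paper simply citing the noncommutative Marcinkiewicz theorem of Dodds--de Pagter--Sukochev where you spell out the Lions--Peetre identifications and the K-functional transfer. Your additional care with the $L^{\infty}$ endpoint is a reasonable elaboration of what the cited theorem already packages.
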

\begin{proof}We can always identify $L^1(\mathcal{M})$ with $\mathcal{M}_{*}$ via the map
$j(x)=\tau(x\cdot).$ Therefore, by formula (F3) in Definition \ref{Fourier-structure} and by Definition \ref{Fourier-transform},
we can define the linear map 
$$
T: x\mapsto \mathcal{F}[x], \quad x\in L^{1}(\mathcal{M})\cap L^{\infty}(\mathcal{M}),$$
which is well defined and extended to a bounded from $L^{1}(\mathcal{M})$ to $L^{\infty}(\widehat{\mathcal{M}})$ by (F1) and by the Plancherel identity (F2), we have    
\begin{equation*} 
\|\mathcal{F}[x]\|_{L^{2}(\widehat{\mathcal{M}})}=\|x\|_{L^{2}(\mathcal{M})},\quad x\in L^{2}(\mathcal{M}).
\end{equation*}
In other words, we obtain that $T$ is bounded from  $L^{1,1}(\mathcal{M})$ to $L^{\infty,\infty}(\widehat{\mathcal{M}})$ and bounded from $L^{2,2}(\mathcal{M})$ to $L^{2,2}(\widehat{\mathcal{M}}).$
 Hence, the assertion follows from the noncommutative Marcinkiewicz theorem (see \cite[Theorem 7.8.2, p. 434]{DPS}).  
\end{proof} 
By the second formula in (F1) and the Plancherel identity (F2) in Definition \ref{Fourier-structure}, we obtain the dualised version of the Hausdorff-Young inequality \eqref{R-H-Y_ineq}.
\begin{lem}\label{L.5.1} (Inverse Hausdorff-Young inequality). Let $1\leq{p}\leq2$  and $\frac{1}{p}+\frac{1}{p'}=1$. Then we have
\begin{eqnarray}\label{H-L_ineq}
\|x\|_{L^{p'}(\mathcal{M})}  \lesssim_{p,p'}   \|\mathcal{F}[x]\|_{L^{p,p'}(\widehat{\mathcal{M}})}.
\end{eqnarray}
\end{lem}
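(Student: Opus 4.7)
The plan is to mirror the argument for Lemma \ref{L.5.2}, but start from the dual pair of endpoint bounds for the formal inverse $\mathcal{F}^{-1}$ and then interpolate with a different choice of the second Lorentz index. The second formula in (F1) and the Plancherel identity (F2) deliver exactly the endpoint bounds that the noncommutative Marcinkiewicz theorem needs.

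First I would introduce the partial inverse
$$S : \mathcal{F}[\mathcal{M}_{*}\cap\mathcal{M}] \to \mathcal{M}, \qquad S[\mathcal{F}[x]] := x,$$
which is unambiguous because $\mathcal{F}_{2}$ is isometric on $L^{2}(\mathcal{M})$ by (F2), and consequently $\mathcal{F}$ is injective on $\mathcal{M}_{*}\cap\mathcal{M}$. The second inequality in (F1), $\|x\|_{\mathcal{M}}\leq\widehat{\tau}(|\mathcal{F}_{1}[x]|)$, reads, with $y=\mathcal{F}[x]$,
$$\|S[y]\|_{L^{\infty}(\mathcal{M})} \leq \|y\|_{L^{1}(\widehat{\mathcal{M}})},$$
so $S$ extends to a bounded linear map $L^{1}(\widehat{\mathcal{M}})\to L^{\infty}(\mathcal{M})$. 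The Plancherel identity (F2) gives $\|S[y]\|_{L^{2}(\mathcal{M})}=\|y\|_{L^{2}(\widehat{\mathcal{M}})}$, so $S$ also extends isometrically from $L^{2}(\widehat{\mathcal{M}})$ to $L^{2}(\mathcal{M})$. Equivalently, $S$ is bounded from $L^{1,1}(\widehat{\mathcal{M}})$ to $L^{\infty,\infty}(\mathcal{M})$ and from $L^{2,2}(\widehat{\mathcal{M}})$ to $L^{2,2}(\mathcal{M})$, which is the precise input format used in the proof of Lemma \ref{L.5.2}.

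Next, I would apply the noncommutative Marcinkiewicz theorem \cite[Theorem 7.8.2]{DPS} with interpolation parameter $\theta\in(0,1)$ satisfying $\tfrac{1}{p}=1-\tfrac{\theta}{2}$ (equivalently $\tfrac{1}{p'}=\tfrac{\theta}{2}$), but with the second Lorentz index chosen equal to $p'$ rather than $p$. This yields
$$\|S[y]\|_{L^{p',p'}(\mathcal{M})} \lesssim_{p,p'} \|y\|_{L^{p,p'}(\widehat{\mathcal{M}})}.$$
Since $L^{p',p'}(\mathcal{M})=L^{p'}(\mathcal{M})$ isometrically for $1<p'<\infty$, substituting $y=\mathcal{F}[x]$ and extending by density from $\mathcal{M}_{*}\cap\mathcal{M}$ produces the desired inequality \eqref{H-L_ineq}; the endpoint cases $p\in\{1,2\}$ are covered directly by the endpoint bounds themselves.

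The only real subtlety is bookkeeping in the Marcinkiewicz step: real interpolation between the bounds $S:L^{1}\to L^{\infty}$ and $S:L^{2}\to L^{2}$ yields a one-parameter family $S:L^{p,s}\to L^{p',s}$ indexed by $s\in[1,\infty]$, and the inverse Hausdorff--Young inequality is recovered as the special case $s=p'$, exactly dual to the choice $s=p$ used in the proof of Lemma \ref{L.5.2}. No additional Paley-type input is required, and no use is made of structure on $\mathcal{M}$ beyond what is already encoded in (F1)--(F3).
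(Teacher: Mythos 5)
Your proof is correct and follows essentially the same route the paper intends: the authors give no written proof of Lemma \ref{L.5.1} beyond the one-sentence remark that it follows from the second inequality in (F1) and the Plancherel identity (F2), which are precisely the two endpoint bounds $S:L^{1}(\widehat{\mathcal{M}})\to L^{\infty}(\mathcal{M})$ and $S:L^{2}(\widehat{\mathcal{M}})\to L^{2}(\mathcal{M})$ that you feed into the noncommutative Marcinkiewicz theorem, exactly mirroring their proof of Lemma \ref{L.5.2} with the second Lorentz index $p'$ in place of $p$. Your bookkeeping ($\tfrac{1}{p}=1-\tfrac{\theta}{2}$, hence the target space $L^{p',p'}=L^{p'}$) is right, so this is a faithful filling-in of the argument the paper leaves implicit.
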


The following is a main result of the paper which shows the $L^p$-$L^q$ boundedness of the Fourier multipliers.
\begin{thm}\label{Hormander-thm} Let  $1<p\leq2\leq q<\infty$ and suppose that $\sigma\in L^{r,\infty}(\widehat{\mathcal{M}}),$ $\frac{1}{r}= \frac{1}{p} - \frac{1}{q}.$ Then the Fourier multiplier $A_\sigma$ is extended to a bounded map from $L^p(\mathcal{M})$ to  $L^q(\mathcal{M}),$ and we have
$$
\|A_\sigma\|_{L^p(\mathcal{M}) \rightarrow L^q(\mathcal{M})}\lesssim_{p,q}  \|\sigma\|_{L^{r,\infty}(\widehat{\mathcal{M}})}, \quad \frac{1}{r}= \frac{1}{p} - \frac{1}{q}.$$ 
\end{thm}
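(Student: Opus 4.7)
The plan is to reduce the $L^p\to L^q$ bound to a short chain of ingredients already in hand: the Hausdorff--Young inequality (Lemma~\ref{L.5.2}), its inverse version (Lemma~\ref{L.5.1}), the Lorentz H\"older inequality (Proposition~\ref{prop-0.2}(ii)), and the Lorentz embedding (Proposition~\ref{prop-0.2}(i)). The underlying idea is a three-step sandwich: dualise the target $L^q$-norm to the Fourier side via inverse Hausdorff--Young, factor out the symbol $\sigma$ by a Lorentz H\"older, and close up by returning $\mathcal{F}[x]$ to the physical side with Hausdorff--Young. No Paley-type inequality is needed.

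In detail, I would fix $x\in \mathcal{M}_*\cap\mathcal{M}$, which is dense in $L^p(\mathcal{M})$, and exploit the inversion identity $\mathcal{F}\cdot\widehat{\mathcal{F}}=\mathrm{id}$ from Definition~\ref{Fourier-multp} to write $\mathcal{F}[A_\sigma x] = \sigma\cdot\mathcal{F}[x]$. Since $q\ge 2$, applying Lemma~\ref{L.5.1} with dual exponent $q'$ gives
\[
\|A_\sigma x\|_{L^q(\mathcal{M})} \;\lesssim_q\; \|\sigma\cdot\mathcal{F}[x]\|_{L^{q',q}(\widehat{\mathcal{M}})}.
\]
A short computation using $\tfrac{1}{r}=\tfrac{1}{p}-\tfrac{1}{q}$ yields the arithmetic identity $\tfrac{1}{q'}=\tfrac{1}{r}+\tfrac{1}{p'}$, so Proposition~\ref{prop-0.2}(ii) applied with $p_0=r$ and $p_1=p'$ (and second Lorentz index $q$) produces
\[
\|\sigma\cdot\mathcal{F}[x]\|_{L^{q',q}(\widehat{\mathcal{M}})} \;\lesssim\; \|\sigma\|_{L^{r,\infty}(\widehat{\mathcal{M}})}\,\|\mathcal{F}[x]\|_{L^{p',q}(\widehat{\mathcal{M}})}.
\]
Because $p\le q$, Proposition~\ref{prop-0.2}(i) bounds $\|\mathcal{F}[x]\|_{L^{p',q}(\widehat{\mathcal{M}})}$ by $\|\mathcal{F}[x]\|_{L^{p',p}(\widehat{\mathcal{M}})}$, and one last application of Lemma~\ref{L.5.2} dominates this by $\|x\|_{L^p(\mathcal{M})}$. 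Chaining these estimates gives the claimed bound on $\mathcal{M}_*\cap\mathcal{M}$, and a density argument then extends $A_\sigma$ to a bounded map $L^p(\mathcal{M})\to L^q(\mathcal{M})$.

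The main obstacle is less the estimate itself than bookkeeping at the degenerate endpoint $p=q=2$, where $r=\infty$ and Proposition~\ref{prop-0.2}(ii) is not directly applicable; here, however, $\sigma\in\widehat{\mathcal{M}}$ and the claim reduces at once to the Plancherel identity~(F2), so this case can be handled separately. A subtler point is to justify the manipulation $\mathcal{F}[A_\sigma x]=\sigma\cdot\mathcal{F}[x]$ as an equality of $\tau$-measurable operators on $\widehat{\mathcal{M}}$: this can be verified by first running the Lorentz estimates formally to place $\sigma\cdot\mathcal{F}[x]$ in a space on which $\widehat{\mathcal{F}}$ is well defined, and then invoking the inversion hypothesis from Definition~\ref{Fourier-multp}. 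Everything else is index-tracking.
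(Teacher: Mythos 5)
Your proposal is correct and follows essentially the same route as the paper's own proof: inverse Hausdorff--Young to pass to $L^{q',q}(\widehat{\mathcal{M}})$, the Lorentz H\"older inequality with $\tfrac{1}{q'}=\tfrac{1}{r}+\tfrac{1}{p'}$ to extract $\|\sigma\|_{L^{r,\infty}}$, the embedding $L^{p',p}\subset L^{p',q}$, Hausdorff--Young, and density of $L^1(\mathcal{M})\cap L^\infty(\mathcal{M})$. Your explicit separate treatment of the degenerate endpoint $p=q=2$ (where $r=\infty$ and the Lorentz H\"older step as stated does not apply) is a small point of care that the paper's proof passes over silently.
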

\begin{proof} Let $\frac{1}{r}:=\frac{1}{p} - \frac{1}{q}.$ Then we have that $\frac{1}{q'}=\frac{1}{r} +\frac{1}{p'}.$  Since $A_\sigma$ is a Fourier multiplier (see Definition \ref{Fourier-multp}), we have
$$\mathcal{F}[A_\sigma x]=\sigma\cdot\mathcal{F}[x], \quad x\in L^1(\mathcal{M})\cap L^{\infty}(\mathcal{M}).$$
Then for any $\sigma\in L^{r,\infty}(\widehat{\mathcal{M}})$ and
$x\in L^1(\mathcal{M})\cap L^{\infty}(\mathcal{M}),$ by Lemmas \ref{L.5.1} and \ref{L.5.2}, and Proposition \ref{prop-0.2}, we have
\begin{eqnarray*}\|A_\sigma x\|_{L^q(\mathcal{M})}&\overset{\eqref{H-L_ineq}}{\lesssim}&\|\mathcal{F}[A_\sigma x]\|_{L^{q',q}(\widehat{\mathcal{M}})}=\|\sigma \cdot \mathcal{F}[x]\|_{L^{q',q}(\widehat{\mathcal{M}})}\overset{\eqref{NC Lorentz space norm II}}{=}\|\mu(\sigma\cdot\mathcal{F}[x])\|_{L^{q',q}(\mathbb{R}_+)}\\
&\overset{\eqref{decreas-proper}}{\lesssim}&\|\mu(\sigma)\cdot\mu(\mathcal{F}[x])\|_{L^{q',q}(\mathbb{R}_+)}\overset{\eqref{Lorentz-embedding-2-oper}}{\lesssim}\|\mu(\sigma)\|_{L^{r,\infty}(\mathbb{R}_+)}\|\mu(\mathcal{F}[x])\|_{L^{p',q}(\mathbb{R}_+)}\\
&\overset{\eqref{NC Lorentz space norm II}}{=}&\|\sigma\|_{L^{r,\infty}(\widehat{\mathcal{M}})}\|\mathcal{F}[x]\|_{L^{p',q}(\widehat{\mathcal{M}})}\overset{\eqref{Lorentz-embedding-1-oper}}{\lesssim}\|\sigma\|_{L^{r,\infty}(\widehat{\mathcal{M}})}\|\mathcal{F}[x]\|_{L^{p',p}(\widehat{\mathcal{M}})}\\
&\overset{\eqref{R-H-Y_ineq}}{\lesssim}&\|\sigma\|_{L^{r,\infty}(\widehat{\mathcal{M}})}\|x\|_{L^{p}(\mathcal{M})}.
\end{eqnarray*}
In other words, we have 
\begin{eqnarray*}\|A_\sigma x\|_{L^q(\mathcal{M})}\lesssim\|\sigma\|_{L^{r,\infty}(\widehat{\mathcal{M}})}\|x\|_{L^{p}(\mathcal{M})},\quad x\in L^1(\mathcal{M})\cap L^{\infty}(\mathcal{M}).
\end{eqnarray*}
Now, since $L^1(\mathcal{M})\cap L^{\infty}(\mathcal{M})$ is dense in $L^p(\mathcal{M})$ (see \cite[Proposition 3.10.2 (i) and Section 3.10, pp. 202-205]{DPS}), we obtain the result for any $x\in L^p(\mathcal{M}),$
thereby completing the proof.
\end{proof}

 Next, we prove the so-called Paley type inequality.
 \begin{thm}\label{Paley-ineq}Let  $1<p\leq2$ and $\frac{1}{s}=\frac{2}{p}-1.$ Then for any $y\in L^{s,\infty}(\widehat{\mathcal{M}})$ and $x\in L^p(\mathcal{M})$ we have
$$
\|y\cdot\mathcal{F}[x]\|_{L^p(\widehat{\mathcal{M}})}\lesssim_{p,s}  \|y\|_{L^{s,\infty}(\widehat{\mathcal{M}})}\cdot\|x\|_{L^p(\mathcal{M})}.$$ 
\end{thm}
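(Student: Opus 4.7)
The plan is to run a chain of inequalities that is essentially a truncated version of the proof of Theorem~\ref{Hormander-thm}: instead of applying the Inverse Hausdorff-Young inequality at the end, we stop one step earlier, since the quantity to be estimated already has the form of a product to which the Lorentz--H\"older inequality can be applied directly.

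First I would check the exponent arithmetic. From the hypothesis $\tfrac{1}{s}=\tfrac{2}{p}-1=\tfrac{1}{p}-\tfrac{1}{p'}$ we get the crucial identity
$$\frac{1}{p}=\frac{1}{s}+\frac{1}{p'},$$
which tells us that $y\cdot \mathcal{F}[x]$ should naturally land in an $L^{p,\,\cdot}$-type space once $y\in L^{s,\infty}(\widehat{\mathcal{M}})$ and $\mathcal{F}[x]\in L^{p',p}(\widehat{\mathcal{M}})$. The second parameter in the product is controlled by the Lorentz--H\"older rule $\tfrac{1}{\infty}+\tfrac{1}{p}=\tfrac{1}{p}$, so the natural target is $L^{p,p}(\widehat{\mathcal{M}})=L^{p}(\widehat{\mathcal{M}})$.

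Next, for $x\in L^{1}(\mathcal{M})\cap L^{\infty}(\mathcal{M})$ I would write the chain
$$\|y\cdot\mathcal{F}[x]\|_{L^{p}(\widehat{\mathcal{M}})}=\|y\cdot\mathcal{F}[x]\|_{L^{p,p}(\widehat{\mathcal{M}})}=\|\mu(y\cdot\mathcal{F}[x])\|_{L^{p,p}(\mathbb{R}_{+})},$$
then use the submultiplicativity \eqref{decreas-proper} of $\mu$ to dominate this by $\|\mu(y)\cdot\mu(\mathcal{F}[x])\|_{L^{p,p}(\mathbb{R}_{+})}$ (up to a harmless dilation in the variable), then apply the Lorentz--H\"older inequality \eqref{Lorentz-embedding-2-oper} with $p_{0}=s$, $s_{1}=\infty$, $p_{1}=p'$, $s_{2}=p$ to obtain the bound $\|y\|_{L^{s,\infty}(\widehat{\mathcal{M}})}\,\|\mathcal{F}[x]\|_{L^{p',p}(\widehat{\mathcal{M}})}$, and finally apply the Hausdorff--Young inequality (Lemma~\ref{L.5.2}) \eqref{R-H-Y_ineq} to replace $\|\mathcal{F}[x]\|_{L^{p',p}(\widehat{\mathcal{M}})}$ by a constant times $\|x\|_{L^{p}(\mathcal{M})}$.

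Finally, since $L^{1}(\mathcal{M})\cap L^{\infty}(\mathcal{M})$ is dense in $L^{p}(\mathcal{M})$, a standard density argument (as already invoked at the end of the proof of Theorem~\ref{Hormander-thm}) extends the inequality to all $x\in L^{p}(\mathcal{M})$. I do not anticipate a genuine obstacle: each ingredient is already on the table. The only point to be careful about is the bookkeeping of the two Lorentz exponents so that $y\cdot\mathcal{F}[x]$ lands precisely in $L^{p,p}$, which is what allows the identification with the ordinary $L^{p}$-norm on the left-hand side.
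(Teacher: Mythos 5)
Your proposal is correct and follows essentially the same route as the paper's own proof: identify $\tfrac{1}{p}=\tfrac{1}{s}+\tfrac{1}{p'}$, pass to singular value functions via \eqref{NC Lorentz space norm II} and \eqref{decreas-proper}, apply the Lorentz--H\"older inequality \eqref{Lorentz-embedding-2-oper}, and finish with the Hausdorff--Young inequality \eqref{R-H-Y_ineq}. The only (harmless) difference is that you first argue on $L^{1}(\mathcal{M})\cap L^{\infty}(\mathcal{M})$ and then invoke density, which is if anything slightly more careful than the paper's direct statement for $x\in L^{p}(\mathcal{M})$.
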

\begin{proof} By assumption we have $\frac{1}{p}=\frac{1}{p'} + \frac{1}{s}.$ Then for any $y\in L^{s,\infty}(\widehat{\mathcal{M}})$ and $x\in L^p(\mathcal{M})$ we obtain
\begin{eqnarray*}\|y\cdot\mathcal{F}[x]\|_{L^p(\widehat{\mathcal{M}})}&=&\|y\cdot\mathcal{F}[x]\|_{L^{p,p}(\widehat{\mathcal{M}})}\overset{\eqref{NC Lorentz space norm II}}{=}\|\mu(y\cdot\mathcal{F}[x])\|_{L^{p,p}(\mathbb{R}_+)}\\
&\overset{\eqref{decreas-proper}}{\lesssim}&\|\mu(y)\cdot\mu(\mathcal{F}[x])\|_{L^{p,p}(\mathbb{R}_+)}\overset{\eqref{Lorentz-embedding-2-oper}}{\lesssim}\|\mu(y)\|_{L^{s,\infty}(\mathbb{R}_+)}\|\mu(\mathcal{F}[x])\|_{L^{p',p}(\mathbb{R}_+)}\\
&\overset{\eqref{NC Lorentz space norm II}}{=}&\|y\|_{L^{s,\infty}(\widehat{\mathcal{M}})}\|\mathcal{F}[x]\|_{L^{p',p}(\widehat{\mathcal{M}})}\\
&\overset{\eqref{R-H-Y_ineq}}{\lesssim}&\|y\|_{L^{s,\infty}(\widehat{\mathcal{M}})}\|x\|_{L^{p}(\mathcal{M})}.
\end{eqnarray*}
This completes the proof.
\end{proof}
As consequences of Theorem \ref{Hormander-thm}, we obtain the main results in \cite[Theorem 5.1]{AR} and \cite[Theorem 1.3]{Z}.

\begin{cor}\label{cor-1}Let $G$ be a locally compact separable unimodular group and let $VN_{R}(G)$ be its von Neumann algebra generated by the right regular representations of $G.$ Let the assumptions of Theorem \ref{Hormander-thm} hold. Then we have 
$$\|A_\sigma\|_{L^p(G) \rightarrow L^q(G)}\lesssim_{p,q}  \|\sigma\|_{L^{r,\infty}(VN_{R}(G))}, \quad \frac{1}{r}= \frac{1}{p} - \frac{1}{q}.$$
\end{cor}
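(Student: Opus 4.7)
The plan is to specialize Theorem \ref{Hormander-thm} to the classical group setting by identifying the abstract triples with their group-theoretic counterparts. I would set $\mathcal{M}=L^{\infty}(G)$ acting on $L^2(G)$ by multiplication and equipped with the trace $\tau(f)=\int_G f(g)\,dg$ given by integration against the (two-sided) Haar measure of the unimodular group $G$; under this identification the noncommutative $L^p$-space $L^p(\mathcal{M})$ is isometrically the ordinary $L^p(G)$. On the dual side I would take $\widehat{\mathcal{M}}=VN_R(G)$ equipped with its canonical Plancherel trace $\widehat{\tau}$, and let the Fourier map be the integrated right regular representation $\mathcal{F}[f]=\int_G f(g)\pi_R(g)\,dg$, initially defined on $L^1(G)\cap L^2(G)$.

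The next step is to verify that this data yields a Fourier structure in the sense of Definition \ref{Fourier-structure}. Since a locally compact unimodular group is a commutative locally compact quantum group with tracial Haar weights, the axioms (F1)--(F3) follow from the quantum group example presented earlier (see also \cite{Cas}). Concretely, the first inequality in (F1) is the standard bound $\|\mathcal{F}[f]\|_{VN_R(G)}\leq \|f\|_{L^1(G)}$ coming from the unitarity of $\pi_R$; the second inequality $\|f\|_{L^{\infty}(G)}\leq \widehat{\tau}(|\mathcal{F}[f]|)$ reflects the $L^{\infty}(G)$--$L^1(VN_R(G))$ duality carried by the Plancherel trace; (F2) is the Plancherel theorem for unimodular groups; and (F3) is automatic because both maps are defined by the same formula on $L^1(G)\cap L^2(G)$. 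The reverse structure on $\widehat{\mathcal{M}}$ with dual $\mathcal{M}$, and with $\widehat{\mathcal{F}}$ playing the role of Fourier inversion, is verified symmetrically, and the inversion identities $\mathcal{F}\cdot\widehat{\mathcal{F}}=\mathrm{id}$ and $\widehat{\mathcal{F}}\cdot\mathcal{F}=\mathrm{id}$ required in Definition \ref{Fourier-multp} reduce to the classical Fourier inversion on $L^2(G)$.

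With these identifications in place, the operator $A_\sigma=\widehat{\mathcal{F}}\cdot\sigma\cdot\mathcal{F}$ coincides with the classical left Fourier multiplier on $G$ with symbol $\sigma\in VN_R(G)$, as spelled out in the remark following Definition \ref{Fourier-multp} and in \cite[Definition 2.16]{AR}. The corollary is then an immediate restatement of Theorem \ref{Hormander-thm} under the dictionary $L^p(\mathcal{M})\leftrightarrow L^p(G)$ and $L^{r,\infty}(\widehat{\mathcal{M}})\leftrightarrow L^{r,\infty}(VN_R(G))$. The main (and essentially only) analytic obstacle is the second inequality in (F1), which lies beyond Plancherel and encodes the fact that $\widehat{\tau}$ implements the predual duality between $L^{\infty}(G)$ and the $L^1$-side of $VN_R(G)$; once this is granted, everything else is formal bookkeeping between the abstract and classical notations.
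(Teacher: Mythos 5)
Your proposal is correct and follows the same route the paper intends: the paper offers no separate argument for this corollary, treating it as a direct specialization of Theorem \ref{Hormander-thm} once $\mathcal{M}=L^{\infty}(G)$, $\widehat{\mathcal{M}}=VN_R(G)$ and the integrated right regular representation are recognized as a Fourier structure in the sense of Definition \ref{Fourier-structure} (via the quantum group example and \cite{Cas}). Your explicit verification of (F1)--(F3) and of the inversion identities, including the duality argument for the second inequality in (F1), supplies exactly the bookkeeping the paper leaves implicit.
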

\begin{cor}\label{cor-2}Let $\left(\mathbb{G}, \Delta, \varphi_{L}, \varphi_{R}\right)$ be a locally compact quantum group with a coproduct map $\Delta$ and the left and right tracial Haar weights $\varphi_{L}$ and $\varphi_{R},$  respectively. Let $\left(\widehat{\mathbb{G}}, \widehat{\Delta}, \widehat{\varphi}_{L}, \widehat{\varphi}_{R}\right)$ be its dual quantum group with tracial states $\widehat{\varphi}_{L}$ and $\widehat{\varphi}_{R},$ respectively.
If $\mathcal{M}=L^{\infty}(\widehat{\mathbb{G}})$ and $\widehat{\mathcal{M}}=L^{\infty}(\mathbb{G})$ are the corresponding von Neumann
algebras and if the assumptions of Theorem \ref{Hormander-thm} are satisfied, then we have 
$$\|A_\sigma\|_{L^p(\mathcal{M}) \rightarrow L^q(\mathcal{M})}\lesssim_{p,q}  \|\sigma\|_{L^{r,\infty}(\mathcal{\widehat{M}})}, \quad \frac{1}{r}= \frac{1}{p} - \frac{1}{q}.$$
\end{cor}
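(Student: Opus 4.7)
The plan is to deduce the corollary from Theorem \ref{Hormander-thm} simply by checking that the pair $(\mathcal{M},\widehat{\mathcal{M}}) = (L^{\infty}(\widehat{\mathbb{G}}), L^{\infty}(\mathbb{G}))$ fits into the Fourier-structure framework of Definition \ref{Fourier-structure}, and that the Fourier multiplier $A_\sigma$ of Definition \ref{Fourier-multp} coincides with the quantum-group Fourier multiplier acting on $L^p(L^{\infty}(\widehat{\mathbb{G}}))$. Once this identification is made, the theorem applies verbatim.

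First, I would invoke Example $2.7$: under the assumption that both the left/right Haar weights $\varphi_L,\varphi_R$ and their duals $\widehat\varphi_L,\widehat\varphi_R$ are tracial, the map $\lambda : \mathcal{M}_* \ni \omega \mapsto (\omega\otimes 1)W$ furnishes the Fourier transform $\mathcal{F}_1 : L^{1}(\mathbb{G}) \to L^{\infty}(\widehat{\mathbb{G}})$ and satisfies (F1); the Plancherel identity $\|\lambda(x\omega)\|_{L^2(\widehat{\mathbb{G}})} = \|x\|_{L^2(\mathbb{G})}$ recorded in Example $2.7$ supplies (F2); and (F3) follows from the fact that on $\mathcal{M}_* \cap \mathcal{M}$ both $\mathcal{F}_1$ and the $L^2$-extension $\mathcal{F}_2$ are given by the same map $\lambda$. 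All of this is collected from \cite[Theorem 5.2]{Cas} and \cite[Section 3.2]{Z}.

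Next, by Kustermans-Vaes-Pontryagin duality one has $\widehat{\widehat{\mathbb{G}}} = \mathbb{G}$, so the entire construction of the previous paragraph, applied to the dual quantum group, equips $\widehat{\mathcal{M}} = L^{\infty}(\mathbb{G})$ with its own Fourier structure $(\mathcal{M}, \widehat{\mathcal{F}}_1, \widehat{\mathcal{F}}_2)$, and the two Plancherel identities together with the (tracial) Haar weight axioms yield the inversion formulae
\[
(\mathcal{F}\cdot\widehat{\mathcal{F}})[y] = y, \quad y\in L^2(\widehat{\mathcal{M}}), \qquad (\widehat{\mathcal{F}}\cdot\mathcal{F})[x] = x, \quad x\in L^2(\mathcal{M}),
\]
that are required in Definition \ref{Fourier-multp}. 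Thus for every $\sigma \in L^{r,\infty}(\widehat{\mathcal{M}})$ the map $A_\sigma = \widehat{\mathcal{F}}\cdot\sigma\cdot\mathcal{F}$ is a Fourier multiplier on $\mathcal{M}$ in the sense of Definition \ref{Fourier-multp}, and this operator agrees on the intersection $\mathcal{M}_* \cap \mathcal{M}$ with the usual quantum-group Fourier multiplier with symbol $\sigma$.

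With the Fourier structure in place, an application of Theorem \ref{Hormander-thm} to $(\mathcal{M},\widehat{\mathcal{M}}) = (L^{\infty}(\widehat{\mathbb{G}}), L^{\infty}(\mathbb{G}))$ immediately produces the claimed bound
$\|A_\sigma\|_{L^p(\mathcal{M})\to L^q(\mathcal{M})} \lesssim_{p,q} \|\sigma\|_{L^{r,\infty}(\widehat{\mathcal{M}})}$ for $\tfrac{1}{r} = \tfrac{1}{p} - \tfrac{1}{q}$. I expect the only genuine obstacle to be the bookkeeping that surrounds the duality: one has to be careful that the identification of the predual $\mathcal{M}_*$ with $L^1(\mathcal{M})$ via $j(x) = \omega(x\,\cdot)$ is compatible with the Plancherel pairings on both $\mathbb{G}$ and $\widehat{\mathbb{G}}$, so that the antipode and the swap of $\mathcal{F}_1,\mathcal{F}_2$ on each side really do deliver genuine mutual inverses at the $L^2$-level. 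Once that point is checked (and it is, by Pontryagin duality in the tracial setting), the remainder of the proof is a direct substitution into Theorem \ref{Hormander-thm}.
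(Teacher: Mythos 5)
Your proof is correct and follows exactly the route the paper intends: the paper states Corollary \ref{cor-2} without further argument, relying on Example 2.7 (via \cite[Theorem 5.2]{Cas} and \cite[Section 3.2]{Z}) to verify that the quantum-group pair satisfies (F1)--(F3), and then applying Theorem \ref{Hormander-thm} directly. Your additional care about the swapped roles of $\mathbb{G}$ and $\widehat{\mathbb{G}}$ and the $L^2$-inversion formulae required by Definition \ref{Fourier-multp} is exactly the bookkeeping the paper leaves implicit.
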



\subsection*{Acknowledgment}
The authors would like to thank to Dr. Rauan Akylzhanov for his helpful discussions on Fourier structure in noncommutative spaces. The second author is partially supported by the grant No. AP23483532 of the Science Committee of the Ministry of Science and Higher Education of the Republic of Kazakhstan.

The authors were partially supported by Odysseus and Methusalem grants (01M01021 (BOF Methusalem) and 3G0H9418 (FWO Odysseus)) from Ghent Analysis and PDE center at Ghent University. The first author was also supported by the EPSRC grants EP/R003025/2 and EP/V005529/1, and FWO Senior Research
Grant G022821N.
Authors thank the anonymous referee for reading the paper and providing thoughtful comments, which improved the exposition of the paper.

\end{document}